\title[Oscillatory Carleson theorems]{Uniform bounds for oscillatory and polynomial Carleson operators}
\author{Jo\~ao P. G. Ramos}
\address{ETH Z\"urich, D-MATH - R\"amistrasse, 101, 8092 Z\"urich - Switzerland}
\email{joao.ramos@math.ethz.ch} 
\keywords{Carleson operators, time-frequency analysis, singular integrals, oscillatory integrals, uniform estimates}
\newcommand{\R}{\mathbb{R}}
\newcommand{\Z}{\mathbb{Z}}
\newcommand{\mmd}{\mathrm{d}}
\newtheorem{theorem}{Theorem}
\newtheorem{lemma}{Lemma}
\newcommand{\tP}{\tilde{P}}
\def\XXint#1#2#3{{\setbox0=\hbox{$#1{#2#3}{\int}$}
\vcenter{\hbox{$#2#3$}}\kern-.5\wd0}}
\begin{document}
\subjclass[2010]{42A20, 42A50, 42B25}
\begin{abstract}
We prove that a variety of oscillatory and polynomial Carleson operators are uniformly bounded on the family of parameters under considerations. As a particular application of our techniques, we prove uniform bounds
for oscillatory Carleson operators near a single scale version of the quadratic Carleson operator. 
\end{abstract}
\maketitle

\section{Introduction}
For $f \in \mathcal{S}(\R),$ one defines the \emph{Carleson operator} to be 
\[
Cf(x) = \sup_{N \in \R} \left| \int_{\R} f(x-t) e^{iNt} \frac{\mmd t}{t}\right| .
\]
A celebrated result related to this operator is that it is bounded on $L^2(\R).$ This was first proved by Carleson \cite{Carleson}, although not directly, as a by-product of 
his proof of almost everywhere convergence of Fourier series of $L^2(\mathbb{T})-$functions. Subsequent works (see, for instance, \cite{Fefferman, LaceyThiele, Hunt}) have contributed to simplify Carleson's proof, connect it to different contexts
and extend it to other $L^p-$spaces. 

In this note, however, we will concentrate on a question originally raised by E. Stein and other questions derived from it: if we define the \emph{polynomial} Carleson theorem of degree $d$ to be 
\begin{equation}\label{eq poly carleson}
C_d f(x) = \sup_{\text{deg} (P) \le d} \left| \int_{\R} f(x-t) e^{i P(t)} \, \frac{\mmd t}{t} \right|,
\end{equation}
is this bounded in $L^p(\R)$ for any $d \ge 1$? For $d=1,$ this is Carleson's theorem. A first contribution in the $d \ge 2$ case was made by Stein and Wainger \cite{SteinWainger}, which proved that if one restricts the supremum in 
\eqref{eq poly carleson} to the class of polynomials of degree at most $d$ such that $P(0) = P'(0) = 0,$ then this new operator is bounded in $L^p(\R),$ for all $p >1.$ We refer the reader aditionally to \cite{GPRY, GHLR, PierceYung, Guo2} for additional developments in this regard. 

Nevertheless, it was not until the work of V. Lie \cite{Lie1} that the first major contribution in this regard was made. In that work, Lie proved that the \emph{quadratic} Carleson operator $C_2$ in the definition above possesses a weak-type bound in $L^2.$ The methods rely on appropriate 
adaptations and decompositions of the quadratic Carleson operator in the same fashion as in the work of Fefferman \cite{Fefferman}. In fact, a similar idea has been employed by both Lie \cite{Lie2} and Zorin-Kranich \cite{pavel} to conclude that, in the general case of $d \ge 2,$ the polynomial Carleson theorem $C_d$, together with 
truncated and generalized versions of it, is bounded in $L^p(\R)$ for any $p>1.$ 

Another related problem is that of bounding other kinds of \emph{oscillatory} Carleson operators, as raised in \cite{GHLR}. There, the authors, driven by the study of certain Hilbert transforms along variable curves, arrive at bounding the \emph{oscillatory} Carleson operators 
\[
C_{\alpha}f(x) = \sup_{N,M \in \R} \left| \int_{\R} f(x-t) e^{iNt} e^{iM[t]^{\alpha}} \, \frac{\mmd t}{t} \right|.
\]
Here, we let $[t]^{\alpha}$ denote either $\text{sign}(t)|t|^{\alpha}$ or $|t|^{\alpha}$ Although the authors prove in \cite{GHLR} that these operators are bounded in $L^p$ whenever $\alpha \in \R, \alpha \not\in\{0,1\},$ the proof is dramatically different for two distinct cases: for $\alpha\neq 2,$ one can 
compare the operator on a certain subset of $\R$ to a Carleson operator, and outside that region it is possible to use a $TT^*-$method, in the same fashion as Stein and Wainger \cite{SteinWainger}, to obtain $L^p-$bounds for such operator. On the other hand, for $\alpha = 2,$ the only proof known so far is that of Lie \cite{Lie1}, which 
employs time-frequency analysis methods and a strategy resembling the original Carleson theorem proof. Due to the difference in such proofs, the bounds on the constant $\|C_{\alpha}\|_{L^p \to L^p}$ \emph{blow up} as $\alpha \to 2$ in \cite{GHLR}. 

In this note, we will mainly focus on proving \emph{uniform bounds} for some instances of such operators. We start by providing an alternative proof for 

\begin{theorem}\label{poly} Let $P(t)$ be a polynomial of degree $d.$ Then the operator 
\[
C_Pf(x) = \sup_{N \in \R} \left| \int_{\R} f(x-t) e^{iNt + iP(t)} \frac{\mmd t}{t} \right| 
\]
is bounded in $L^p, \, 1< p < +\infty,$ with bounds depending \emph{only} on $p,d.$ 
\end{theorem}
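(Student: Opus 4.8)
The plan is to reduce the supremum over the linear frequency parameter $N$ to a supremum over a discrete, well-separated set, and then exploit the fact that $P(t)$ is a \emph{fixed} polynomial — so that the genuinely non-linear part of the phase $e^{iNt+iP(t)}$ is harmless away from a controlled region of scales. More precisely, I would first perform a Littlewood–Paley-type decomposition of the kernel $\frac1t$ into pieces $\psi_j(t) = 2^{-j}\psi(2^{-j}t)$ supported on $|t|\sim 2^j$, $j \in \Z$, writing $C_P f(x) = \sup_N |\sum_j \int f(x-t) e^{iNt+iP(t)}\psi_j(t)\,\mmd t|$. On the piece of scale $2^j$, the polynomial contributes a phase $P(t) = \sum_{k=1}^d a_k t^k$ whose total oscillation over the support is governed by $\Lambda_j := \max_k |a_k| 2^{jk}$; the crucial dichotomy is between scales where $\Lambda_j \lesssim 1$ (the phase $e^{iP(t)}$ is essentially constant and a Taylor expansion linearizes it, absorbing lower-order terms into $N$) and scales where $\Lambda_j \gtrsim 1$.

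For the ``small'' scales (those with $\Lambda_j \lesssim 1$), I expect to write $e^{iP(t)} = \sum_{m\ge 0} \frac{(iP(t))^m}{m!}$ and bound term by term, each term being a (polynomially weighted) piece of a standard Carleson operator, whose operator norm is summable in $m$ because of the gain $\Lambda_j^m / m!$; summing the geometric-type series in $j$ over this range and invoking the $L^p$ boundedness of the classical Carleson operator of Carleson--Hunt gives the desired bound with constants depending only on $p$ and $d$. For the ``large'' scales ($\Lambda_j \gtrsim 1$), the operator is \emph{not} close to a Carleson operator, and here I would run a $TT^*$ argument in the spirit of Stein--Wainger \cite{SteinWainger}: the relevant multiplier is $\widehat{e^{iNt+iP(t)}\psi_j}(\xi)$, and by van der Corput / stationary phase estimates (using that at least one coefficient $a_k$ with $k\ge 2$ satisfies $|a_k|2^{jk}\gtrsim 1$, while $d$ is bounded so the number of critical points is $O_d(1)$) one gets decay in the remaining parameters that is uniform in $N$ and summable in $j$ over this range; the linearization in $N$ is handled by a standard sup-to-$\ell^2$-sum passage after discretizing $N$ to the lattice $2^{-j}\Z$ on each scale, where square-function estimates control the maximal operator.

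The main obstacle I anticipate is \emph{not} either regime individually but the \emph{transition} scales, where $\Lambda_j \approx 1$: there the phase oscillates by an $O(1)$ amount, so neither the Carleson comparison (which needs $\Lambda_j$ small) nor van der Corput (which needs $\Lambda_j$ large) is decisive, and one must be careful that only $O_d(1)$ values of $j$ fall in this window for any given $P$ — this is where the uniformity in the coefficients of $P$ is won or lost. A clean way to organize this is to rescale: on scale $2^j$, substitute $t = 2^j s$ so that the problem becomes bounding $\sup_{N} |\int f(x-2^j s) e^{i(N 2^j)s + i \tilde P_j(s)}\psi(s)\,\mmd s|$ with $\tilde P_j(s) = P(2^j s)$ having coefficients $a_k 2^{jk}$; the number of scales on which $\tilde P_j$ has some coefficient of size comparable to $1$ is $O(d)$ regardless of $P$, and on those finitely many scales one uses a crude $L^2\to L^2$ bound for a single-scale oscillatory integral operator (which is $O(1)$ by Plancherel, since the kernel is in $L^\infty$ with compact support). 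Summing the three contributions — small scales, large scales, transition scales — yields Theorem~\ref{poly} with constants depending only on $p$ and $d$.
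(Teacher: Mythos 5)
You've correctly identified the two-regime dichotomy (small vs.\ large oscillation) and the finiteness of the transition window, both of which are central to the paper's argument as well. However, your treatment of the ``large scales'' regime has a genuine gap that sits exactly at the heart of why the quadratic Carleson operator is hard. You propose to run a $TT^*$ argument \`a la Stein--Wainger there, ``using that at least one coefficient $a_k$ with $k \ge 2$ satisfies $|a_k|2^{jk} \gtrsim 1$.'' But when that coefficient is precisely $a_2$ (and the higher $|a_k|2^{jk}$, $k \ge 3$, are all small), the $TT^*$ kernel has phase $N(x)t - N(y)t + a_2\bigl[(x-t)^2 - (y-t)^2\bigr]$, which is \emph{linear} in $t$: the quadratic modulation cancels in $TT^*$. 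Van der Corput therefore gives no decay --- indeed, if the linearizing function is $N(x) = 2a_2 x$ the phase degenerates identically, the single-scale operator has $L^2$ norm $\sim 1$, and the sum over $j$ does not converge. This is exactly why Stein--Wainger does not extend to maximally modulated quadratics and why the quadratic Carleson operator required Lie's time--frequency argument rather than a direct oscillatory-integral estimate.

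The paper's fix is to treat the quadratic term as part of the Carleson comparison, not the oscillatory-decay regime. One writes $N(x)t + P(t) = N'(x)t + at^2 + \tilde P(t)$ with $\tilde P(0) = \tilde P'(0) = \tilde P''(0) = 0$, and observes that $\sup_N \bigl|\int f(x-t) e^{iNt + iat^2}\,\frac{\mmd t}{t}\bigr|$, with $a$ fixed, reduces to the ordinary Carleson operator via $f \mapsto e^{ia(\cdot)^2}f$ after completing the square (the resulting bound is independent of $a$). The gap decomposition is then run on the cubic-and-higher part $\tilde P$ only, and in fact one insists that on each ``good'' scale \emph{both} $\tilde P$ \emph{and} $\tilde P'''$ be dominated by single monomials, so that the $TT^*$ argument always has a genuine nonvanishing third derivative to feed into van der Corput. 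If you reorganize your outline so that only coefficients with $k \ge 3$ enter the stationary-phase estimates and the linear-plus-quadratic part of the phase is carried along into the Carleson comparison, the rest of your argument goes through.
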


Notice that, by the works of Lie \cite{Lie2} and Zorin-Kranich \cite{pavel}, this result is not new as stated, being a consequence of the more general polynomial Carleson theorem. We stress, however, that the proof we provide here is conceptually different: we prove that boundedness of the 
original (linear) Carleson operator \emph{imply} bounds for this polynomial version. This is connected to recent results by this author, which use operators similar to $C_P$ above in order to study degenerate cases of maximal modulations of the Hilbert transform along the parabola. Indeed, in \cite{Ramos1}, 
we consider the multiplier 
$$m(\xi,\eta) = \text{p.v.}\int_{\R} e^{2 \pi i\xi t + 2\pi i\eta t^2} \, \frac{\mmd t}{t},$$
which is associated to the operator $\mathcal{H}_2f(x,y) = \int_{\R} f(x-t,y-t^2)\, \frac{\mmd t}{t}$, and possesses an anisotropic dilation invariance. We thus define the family 
\[
m_{a,b}(\eta) =  m(a\eta + b, \eta), a,b \in \R.
\]
In order to prove that the operators $\mathcal{C}_{a,b}f(x) = \sup_{N \in \R} |T_{a,b}(\mathcal{M}_Nf)(x)|,$  where $T_{a,b}h = (m_{a,b}\widehat{h})^{\vee},$ are bounded in $L^p$ \emph{uniformly} on $a,b \in \R,$ one needs to resort to Theorem \ref{poly}. Indeed, in \cite{Ramos1}, we use a stronger version of Theorem \ref{poly} in order to obtain an even 
stronger statement about uniformity of bounds for $\mathcal{C}_{a,b},$ but with the present methods we are already able to answer the question of how to obtain Theorem \ref{poly} directly from bounds for the Carleson operator. 

For the proof of Theorem \ref{poly}, we have two main steps: the first is to use a \emph{gap decomposition}, in the same spirit of Guo \cite{Guo1} (see also \cite{CarberyRicciWright} for the original idea behind this decomposition), to reduce matters to the quadratic case. The second step deals is to deal with the version of
the quadratic carleson operator arising from that. Although bounds for the operator 
\[
\tilde{C}_2f(x) = \sup_{N \in \R} \left| \int_{\R} f(x-t) e^{iNt} e^{it^2} \, \frac{\mmd t}{t}\right| 
\]
follow directly from the boundedness of the Carleson operator by using the mapping $f(t) \mapsto e^{it^2}f(t),$ this also provides us with a natural connecting point with the next result.

\begin{theorem}\label{known} Let $\alpha >0, \, \alpha \not\in \{0,1\}.$ Then the oscillatory Carleson operator 
\[
\mathcal{C}_{\alpha} f(x) :=  \sup_{N \in \R} \left|\int_{\R} f(x-t) e^{iNt} e^{i[t]^{\alpha}} \frac{\mmd t}{t} \right|
\]
is bounded in $L^p(\R), 1 < p < + \infty,$ with $\sup_{\alpha \in (2-\beta,2+\beta)} \|\mathcal{C}_{\alpha}\|_{p \to p} < +\infty$ whenever $0< \beta < 1$. 
\end{theorem}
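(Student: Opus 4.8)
The plan is to deduce Theorem~\ref{known} from the boundedness of the classical Carleson operator, by comparing $\mathcal{C}_{\alpha}$ — after an appropriate decomposition — to the family of fixed--coefficient quadratic operators $\tilde{C}_2^{(\lambda)}f(x)=\sup_{N\in\R}\bigl|\int_{\R}f(x-t)e^{iNt}e^{i\lambda t^{2}}\,\tfrac{\mmd t}{t}\bigr|$, $\lambda\in\R\setminus\{0\}$, which all satisfy $\|\tilde{C}_2^{(\lambda)}\|_{p\to p}=\|\tilde{C}_2^{(\mathrm{sgn}(\lambda))}\|_{p\to p}\le\|C\|_{p\to p}$ by the rescaling $t\mapsto|\lambda|^{-1/2}t$ together with the map $f\mapsto e^{\pm it^{2}}f$ recalled in the Introduction. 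The crucial feature is that this bound is independent of $\lambda$ \emph{and} of $\alpha$. First I would split $\mathcal{C}_{\alpha}f\le\mathcal{C}_{\alpha}^{0}f+\sum_{j\ge1}\mathcal{C}_{\alpha}^{(j)}f$, where $\mathcal{C}_{\alpha}^{0}$ and $\mathcal{C}_{\alpha}^{(j)}$ restrict the $t$--integral to $|t|\le1$ and to $|t|\sim2^{j}$ respectively. For $\mathcal{C}_{\alpha}^{0}$ one writes $e^{i[t]^{\alpha}}=1+(e^{i[t]^{\alpha}}-1)$: the first term gives a truncated Carleson operator, while the second is dominated by $\int_{|t|\le1}|f(x-t)|\,|t|^{\alpha-1}\,\mmd t\lesssim Mf(x)$ with a constant $\le2/\alpha$, uniform for $\alpha$ near $2$; this disposes of $\mathcal{C}_{\alpha}^{0}$ with an $\alpha$--uniform bound.

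The heart of the matter is a single--scale estimate of the form $\|\mathcal{C}_{\alpha}^{(j)}\|_{p\to p}\lesssim_{p}2^{-\delta(p)\,j}$ with $\delta(p)>0$ and implied constant uniform in $\alpha\in(2-\beta,2+\beta)$. Rescaling $t=2^{j}s$ turns $\mathcal{C}_{\alpha}^{(j)}$ into an oscillatory Carleson operator on the fixed annulus $|s|\sim1$ with a \emph{frozen} chirp coefficient $M_{j}=2^{j\alpha}$, acting on an $L^{p}$--isometric copy of $f$. On $|s|\sim1$ the phase $s\mapsto M_{j}[s]^{\alpha}$ has no critical point and satisfies $|(M_{j}[s]^{\alpha})''|=M_{j}\alpha(\alpha-1)|[s]^{\alpha-2}|\sim M_{j}$, with constants that are uniform whenever $\alpha$ lies in a compact subset of $(1,3)$; hence van der Corput's second--derivative estimate yields, for the kernel $h_{j}(s)=e^{iM_{j}[s]^{\alpha}}\psi(s)/s$ (with $\psi$ a fixed bump), the bounds $\|\widehat{h_{j}}\|_{\infty}\lesssim M_{j}^{-1/2}$ and likewise $\|\partial_{\xi}\widehat{h_{j}}\|_{\infty}\lesssim M_{j}^{-1/2}$. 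I would then distinguish, for each $j$, the \emph{non--stationary regime} of modulation parameters $\tilde N$ for which $\tilde N s+M_{j}[s]^{\alpha}$ has no stationary point on $|s|\sim1$ — here repeated integration by parts on the frequency side gives a rapidly decaying contribution, summable over $j$ and uniform in $\alpha$ — and the \emph{stationary regime} $|\tilde N|\sim M_{j}$, in which the phase, expanded at its critical point, equals a quadratic polynomial of coefficient $\sim M_{j}$ plus a remainder whose Taylor coefficients carry a factor $\alpha-2$; in this regime $\mathcal{C}_{\alpha}^{(j)}$ is compared, scale by scale, to $\tilde{C}_2^{(\lambda)}$ with $\lambda\sim M_{j}$, whose $L^{p}$--norm is bounded by $\|C\|_{p\to p}$ regardless of $\lambda$ and $\alpha$. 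Interpolating the van der Corput $L^{2}$ gain $M_{j}^{-1/2}$ against the (at most polynomial in $M_{j}$) $L^{p}$ bound coming from this comparison, and then bootstrapping through all exponents by means of the boundedness of $C$ on every $L^{p}$, $1<p<\infty$, produces the desired $2^{-\delta(p)j}$ with an $\alpha$--free constant; summation over $j\ge1$ then finishes the proof.

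The main obstacle is the interplay between the supremum over $N$ and the summation over scales. When $\alpha=2$ this difficulty disappears, since $\mathcal{C}_{2}f=\tilde{C}_2^{(1)}f=C(e^{it^{2}}f)$ handles all scales at once; but $\alpha=2$ is in no way exceptional for the single--scale estimate above, because the supremum over $N$ always carries the full strength of the Carleson operator, and what makes everything \emph{uniform} is precisely that the three ingredients of the argument — the lower bound $|\phi''|\sim M_{j}$ in van der Corput, the absence of critical points of $[s]^{\alpha-1}$ on $|s|\sim1$, and the constant $\|\tilde{C}_2^{(\lambda)}\|_{p\to p}\le\|C\|_{p\to p}$ — are all insensitive to $\alpha$ on compact subsets of $(1,3)$. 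This is exactly why the present route avoids the blow--up of \cite{GHLR}: there, the complement of the stationary set is compared to the \emph{linear} Carleson operator, and the size of the region on which this linearization is accurate degenerates as $\alpha\to2$, whereas here the stationary region is compared to the \emph{quadratic} operator $\tilde{C}_2^{(\lambda)}$, which never sees $\alpha$. The technically delicate point, and the one I would expect to require the most care, is to extract genuine decay in $j$ from the stationary regime: a naive triangle inequality over the sub--intervals on which the phase is essentially quadratic — equivalently, a bound by the total variation of $\widehat{h_{j}}$ over its frequency support — loses a power of $2^{j}$ and must be replaced by an honest time--frequency argument exploiting the (almost) disjointness of the frequency supports of the chirp pieces, which is where the boundedness of $\tilde{C}_2$ enters in an essential, non--black--box way and where $\alpha$--uniformity of the exponents has to be checked carefully.
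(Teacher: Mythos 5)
Your overall skeleton — low-frequency truncation giving a truncated Carleson operator plus maximal-function error, then a single-scale estimate with geometric decay in the dyadic scale, summed up — is the right shape, and your disposal of $\mathcal{C}_{\alpha}^{0}$ is correct. But the mechanism you propose for the single-scale decay has a genuine gap, and it is the whole ballgame.

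The problem is that your two ingredients for the stationary regime pull in opposite directions and neither actually delivers decay in $j$. The comparison to $\tilde{C}_2^{(\lambda)}$ is a \emph{bounded} operator uniformly in $\lambda$; by design it loses nothing, so it gives no gain in $j$. The van der Corput estimate $\|\widehat{h_j}\|_{\infty}\lesssim M_j^{-1/2}$ is an $L^{2}$ gain for the \emph{linear} operator $f\mapsto f*h_j$ at a \emph{fixed} modulation, but the supremum over $N$ (equivalently the measurable linearizing function $N(x)$, which can vary wildly with $x$) destroys this: a single fixed Fourier multiplier bound does not transfer to the maximally modulated operator. To extract $L^{2}$ decay from the oscillation you need to run a genuine $TT^{*}$ argument in the presence of the measurable $N(x)$, which is what Stein--Wainger do and what the paper sets up from scratch. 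You acknowledge this yourself at the end ("must be replaced by an honest time-frequency argument exploiting the almost disjointness of the frequency supports of the chirp pieces, and where $\alpha$-uniformity of the exponents has to be checked carefully"), but that sentence is precisely the missing proof, not a gloss on it.

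Concretely, the paper's Section~\ref{uniform} does not invoke $\tilde{C}_2^{(\lambda)}$ at all (that enters only in Theorem~\ref{poly}). Instead each dyadic annulus $|t|\sim 2^n$ is cut into sub-intervals of the stationary-phase width $\gamma_n\sim 2^{(1-\alpha/2)n}$ (chosen so the chirp oscillates $O(1)$ times per sub-interval), giving pieces $T_{n,j,\alpha}$; these are then decomposed further into $T^{k,l}_{n,j,\alpha}$ according to the spatial box of $x$ and the frequency box in which $N(x)$ lands. Lemma~\ref{wavepack2} is an almost-orthogonality estimate between pieces with different frequency indices $l$, proved by non-stationary phase: the derivative of $(N(x_1)-N(x_2))y + (|x_1-y|^{\alpha}-|x_2-y|^{\alpha})$ is dominated by the linear term $\sim|l_1-l_2|\gamma_n^{-1}$ once $|l_1-l_2|$ exceeds an absolute constant, with the crucial observation that the nonlinear contribution to $\phi'$ is $\lesssim\gamma_n^{-1}$ with implicit constant bounded on $|\alpha-2|\le\frac12$. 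Summing the orthogonality against the count of sub-pieces yields an $L^{2}$ bound $\sim 2^{-9n/20}\gamma_n$ for scale $n$, and this, together with the trivial $Mf$ pointwise bound, interpolates to $L^{p}$ decay that sums for $\alpha\ge\frac32$. None of these steps is recoverable from your blueprint without writing out exactly this kind of wave-packet/orthogonality argument — so your proposal identifies the right difficulty but does not resolve it, and as written the single-scale decay claim $\|\mathcal{C}_\alpha^{(j)}\|_{p\to p}\lesssim 2^{-\delta(p)j}$ is unproved.
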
 

Of course, the bounds for each of the operators $\mathcal{C}_{\alpha}$ individually is known, but, as previously stated, the point here is to prove that bounds hold uniformly in a viscinity of $\alpha = 2,$ with our proof being new and, to the best of our knowledge, it cannot be
derived in a striaghtforward manner from the results of Lie \cite{Lie1, Lie2}. 

The main new idea for the proof of Theorem \ref{known} is a deeper understanding of a wave packet decomposition guided by the oscillatory factor. Instead of redoing the original Carleson proof, we identify in which regions the best strategy is to compare our operator with a (truncated) Carleson operator, and where the oscillation 
coming from the $[t]^{\alpha}$ term takes over and induces decay. 

This paper is organized as follows: in Section \ref{simplerr}, we prove Theorem \ref{poly} by reducing to the quadratic case. We then focus, on Section \ref{uniform}, on 
proving bounds for the the case $|\alpha - 2| < \frac{1}{2}$ of Theorem \ref{known} with bounds uniform in the parameter $\alpha$ (for the remaining cases, see, for instance, \cite[Corollary~1.7]{GHLR}). We reserve the last section for related remarks and comments. 

\subsection*{Acknowledgements} The author is thankful to Christoph Thiele, Pavel Zorin--Kranich and Shaoming Guo for discussions that led to the final form of this manuscript. 
 
\section{Theorem \ref{poly} and bounds for a simplified polynomial Carleson operator}\label{simplerr} In this section, we prove that the operator 
\[
C_Pf(x) := \sup_{N \in \R} \left|\int_{\R} f(x-t) e^{iNt + iP(t)} \,\frac{\mmd t}{t}\right|,
\]
where $P$ is a fixed polynomial, is bounded in $L^p(\R), p>1,$ with bounds depending only on the degree of $P.$ We focus thus on proving bounds for 
\[
C_P^Nf(x) := \int_{\R} f(x-t) e^{iN(x)t + iP(t)} \, \frac {\mmd t}t, 
\]
in $L^p$ independently of $N$, depending only on $\text{deg }P,$ where $N : \R \to \R_{+}$ is a measurable function which we may choose as taking on only finitely many values. 

\label{redquad} We first assume bounds on the case $\text{deg }P = 2$ to obtain the general case. We follow closely the approach by Guo \cite{Guo1} in order to achieve uniform bounds on oscillatory singular integrals with respect to 
fewnomials. Namely, we find a decomposition of the real line associated to the polynomial $P$, but with parameters depending only on $\text{deg} P.$ \\

For that purpose we let $N(x)t + P(t) = N'(x)t + at^2 + \tilde{P}(t),$ where $\tP(0) = \tilde{P}'(0) = \tilde{P}''(0) = 0$. We let $a_i$ be the coefficient of $t^i$ in the expansion of $\tilde{P}(t)$, and define $\lambda_k = 2^{1/k}.$ Finally, fix $C_d > 0$ large enough -- $C_d \ge 10^{d!}$ will work, for instance -- and 
define the first set of \emph{bad scales} associated to $j,k \in [3,d]$ to be 
\[
\mathcal{S}_{bad}^1(j,k) :=\{l \in \Z \colon 2^{-C_d} |a_k \lambda_d^{kl}| \le |a_j \lambda_d^{jl}| \le 2^{C_d} |a_k \lambda_d^{kl}|\}.
\]
This set is connected and has cardinality at most $4dC_d.$ In fact, if $l_1, l_2 \in \mathcal{S}_{bad}^1(j,k)$, then the corresponding equations in the definition imply that, for $\theta \in (0,1),$ 
\[
2^{-C_d} |a_k \lambda_d^{k(l_1 \theta + (1-\theta)l_2)} | \le |a_j \lambda_d^{j(l_1 \theta + (1-\theta)l_2)}| \le 2^{C_d} |a_k \lambda_n^{k(l_1\theta + (1-\theta)l_2)}|.
\]
This plainly implies connectivity. Also, the minimal scale $l_{min} \in \mathcal{S}^1_{bad}(j,k)$ satisfies that $2^{-C_d} |a_k \lambda_d^{kl_{min}}| \sim |a_j \lambda_d^{jl_{min}}|.$ This implies 
$l_{min} = \frac{1}{j-k} \left(d\log_2(|a_k|/|a_j|) - dC_d\right).$ The same analysis shows that $l_{max} = \frac{1}{j-k} \left(d \log_2 (|a_k|/|a_j|) + d C_d\right).$ The cardinality assertion then follows. \\

We define, therefore, 
\[
\mathcal{S}_{bad}^1 = \bigcup_{j \ne k} \mathcal{S}_{bad}^1(j,k), \,\,\mathcal{S}_{good}^1 = (\mathcal{S}_{bad}^1)^c. 
\]
The set $\mathcal{S}_{good}^1$ has at most $d^2$ connected components, in each of which a monomial ``dominates" $\tP$. Indeed, we have at most $ \text{deg}(P)^2$ nonempty sets $\mathcal{S}^1_{bad}(k,j),$ all of which are intervals. Their complement is then 
a union of at most $d^2$ intervals. We let $\mathcal{S}^2_{bad}(k,j)$ be defined in complete analogy to $\mathcal{S}^1_{bad}(k,j)$, but with respect to the sequence $b_j = j(j-1)(j-2) a_j$ of coefficients of the 
third derivative of $\tP.$ As before, we take
$$\mathcal{S}_{bad}^2 = \cup_{j \ne k} \mathcal{S}_{bad}^2(j,k), \, \mathcal{S}_{good} = \mathcal{S}_{good} \backslash \mathcal{S}_{bad}^2.$$
$\mathcal{S}_{good}$ has therefore at most $2d^2$ connected components, in each of which both $\tP$ and $\tP'''$ are ``dominated" by a monomial. \\ 

We use this scale decomposition to bound 
\[
C_P^Nf(x) := \sum_{l \in \Z} \int_{\R} f(x-t) e^{iN(x)t + iP(t)} \psi_l(t) \frac{\mmd t}{t},
\]
where $\psi_0(t)$ is a smooth function supported in $[-\lambda_d^2,-\lambda_d^{-1}] \cup [\lambda_d^{-1},\lambda_d^2]$ such that 
\[
\sum_{l \in \Z} \psi_l(t) := \sum_{l \in \Z} \psi_0\left(\frac{t}{\lambda_d^l}\right) = 1, \, \forall t \ne 0.
\]
As we saw before, the set $\mathcal{S}_{bad} := \mathcal{S}^1_{bad} \cup \mathcal{S}^2_{bad}$ is finite, with cardinality bounded by a constant depending only on $d.$ On each of the scales $l \in \mathcal{S}_{bad},$ 
we have 
\[
\left|\int_{\R} f(x-t) e^{iN(x)t + iP(t)} \psi_l(t) \frac{\mmd t}{t}\right| \le 4 Mf(x),
\]
where $M$ denotes the Hardy--Littlewood maximal function. Therefore, 
\[
\left|\sum_{l \in \mathcal{S}_{bad}} \int_{\R} f(x-t) e^{iN(x)t +iP(t)} \psi_l(t) \frac{\mmd t}{t} \right| \le A_d \cdot Mf(x).
\]
Here, $A_d$ only depends on the number of bad scales, which is bounded by $(4dC_d)^2.$ This completes the treatment of $\mathcal{S}_{bad}$. For the good scales, we pick each connected component $\mathcal{S}_{good}(k_1,k_2) \subset \mathcal{S}_{good}$ associated to a pair $k_1,k_2 \in [3,d]$ so that $l \in \mathcal{S}_{good}(k_1,k_2),
\, t \in [\lambda_d^{l-2},\lambda_d^{l+1}],$
\[
\frac{1}{2} \cdot (1-d2^{-C_d})|a_{k_1} t^{k_1}| \le |\tP(t)| \le 2 (1+d2^{-C_d}) |a_{k_1} t^{k_1}|,
\]
\[
2(1+d2^{-C_d})|a_{k_2}t^{k_2-3}| \ge  |\tP^{(3)}(t)| \ge \frac{1}{2} \cdot (1-d2^{-C_d})|a_{k_2}t^{k_2-3}|.
\]
In particular, $|\tP(t)| \le 8|a_{k_1} t^{k_1}|, \, |\tP^{(3)}(t)| \ge \frac{1}{8} |a_{k_1} t^{k_1-3}|.$ Next, we gather scales in a single connected component by 
\[
\Phi_{k_1,k_2} (t) = \sum_{l \in \mathcal{S}_{good}(k_1,k_2)} \psi_l(t).
\]
Now let $\phi^k_{0}$ be a smooth positive function supported in $[-\lambda_k^2,-\lambda_k^{-1}] \cup [\lambda_k^{-1},\lambda_k^2]$ such that 
\[
\sum_{i \in \Z} \phi^k_i(t) := \sum_{i \in \Z} \phi^k_0\left(\frac{t}{\lambda_k^i}\right) = 1, \, \forall t \ne 0.
\]
Finally, if $B_k \in \Z$ is so that $\lambda_k^{-B_k} \le |a_k| \le \lambda_k^{-B_k + 1},$ we let $\theta_k = B_k/k$. \\ 

From the previous considerations, we are left to bound 
\[
\sum_{l \in \mathcal{S}_{good}} \int_{\R} f(x-t) e^{iN(x)t +iP(t)} \psi_l(t) \frac{\mmd t}{t}= \sum_{(k_1,k_2)} \int_{\R} f(x-t) e^{iN(x)t + iP(t))} \Phi_{k_1,k_2}(t) \frac{\mmd t}{t}.
\]
As the number of connected components of $\mathcal{S}_{good}$ is $\le 2d^2,$ we only need to bound each summand individually, which we write as 
\[
\sum_{i \in \Z} T^{k_1,k_2}_if(x) := \sum_{i \in \Z} \int_{\R} f(x-t) e^{iN(x)t + iP(t)} \Phi_{k_1,k_2}(t) \phi^{k_1}_i(t) \frac{\mmd t}{t}.
\]
We split the sum above further as 
\[
\sum_{i \le \theta_{k_1}} T^{k_1,k_2}_if(x) + \sum_{i > \theta_{k_1}} T^{k_1,k_2}_if(x).
\]
By comparing each term in the first summand, we arrive at 
\begin{align*}
\left|\sum_{i \le \theta_{k_1}} T^{k_1,k_2}_if(x) \right| \le & \left| \sum_{i \le \theta_{k_1}}  \int_{\R} f(x-t)  e^{iN(x)t + iat^2} \Phi_{k_1,k_2}(t) \phi^{k_1}_i(t) \frac{\mmd t}{t}\right| \cr
 & + \sum_{i \le \theta_{k_1}} \int_{\R}|f(x-t)| |\tP(t)|  \Phi_{k_1,k_2}(t) \phi^{k_1}_i(t) \frac{\mmd t}{|t|}.
\end{align*}
In the first integral, the main observation is that $\Phi_{k_1,k_2} \cdot \left(\sum_{i \le \theta_{k_1}} \phi_i^{k_1}\right)$ approximates the characteristic function of an interval. The first term on the right hand is then comparable to truncations of the quadratic Carleson operator, where the supremum is taken over phases of the form $Nt + at^2,\,$ $a$ fixed.
The remaining error terms amount to an absolute constant times a maximal function. \\

It is not difficult to show that the integrand in the second term on the right hand side is bounded by $8|f(x-t)|\lambda_{k_1}^{(i-\theta_{k_1}) \cdot k_1} \cdot \lambda_{k_1}^{-i}$ on a set of measure $\sim \lambda_{k_1}^i.$ 
This is due to the growth estimate on $|\tP(t)| \le 4 |a_{k_1} t^{k_1}|$ on the support of $\phi_i^{k_1}$. Summing in $i \le \theta_{k_1}$ yields that the summand is bounded by at most $10^2 \cdot Mf(x).$ \\ 

For the remaining part, we use the $TT^*$ method. We wish to bound
\[
\sum_{i = 0}^{\infty} \|T^{k_1,k_2}_{i + \theta_{k_1}} f\|_p.
\]
In order to do it, we notice the pointwise bound $|T^{k_1,k_2}_{i+\theta_{k_1}}f| \le 2 Mf(x).$ We now need to prove exponential decay for the $L^2$ bounds of $T^{k_1,k_2}_{i+ \theta_{k_1}}.$ This follows from
proving the same exponential decay in bounds for $T^{k_1,k_2}_{i+\theta_{k_1}}(T^{k_1,k_2}_{i+\theta_{k_1}})^*$. After a change of variables, the convolution kernel of this last expression is given by
\begin{align}\label{eq oscillatory_0}
(\lambda_{k_1})^{-(i+\theta_{k_1})} \int_{\R} & e^{i(N(y)-N(y))\cdot (\lambda_{k_1})^{i+\theta_{k_1}} \cdot s'} \cr 
& \times e^{i [\tP(\lambda_{k_1}^{i + \theta_{k_1}}s') - \tP(\lambda_{k_1}^{i + \theta_{k_1}}(s'-\xi'))] } \cdot \frac{\phi^{k_1}_0(s')}{s'} \cdot \frac{\phi^{k_1}_0(s'-\xi')}{s'-\xi'} \, \mmd s', 
\end{align}
where $\xi = (\lambda_{k_1})^{i+ \theta_{k_1}} \xi'.$ We use, as per usual in such contexts, stationary phase. Our phase function this time is 
\[
v\cdot (\lambda_{k_1})^{i+\theta_{k_1}} \cdot s' + \tP(\lambda_{k_1}^{i + \theta_{k_1}}s') - \tP(\lambda_{k_1}^{i + \theta_{k_1}}(s'-\xi')).
\]
Differentiating twice, we obtain that the second derivative of the phase in $s'$ is at least as large as 
\begin{align*}
 \lambda_{k_1}^{2(i+\theta_{k_1})} |\tP''(\lambda_{k_1}^{i + \theta_{k_1}}s') - \tP''(\lambda_{k_1}^{i + \theta_{k_1}}(s'-\xi'))| \cr 
 \ge \frac{1}{8} \cdot \lambda_{k_1}^{3(i+\theta_{k_1})} |\xi'| |a_{k_1}| |(\lambda_{k_1})^{i+\theta_{k_1}}|^{k_1 - 3} \ge 2^{i-4} |\xi'|, 
\end{align*}
where we used the mean value theorem and the lower bound on $\tP^{(3)}$ on the scale $\lambda_{k_1}^{i+\theta_{k_1}}.$ The proof is finished by splitting between $|\xi'| \ge 2^{-i/10}$ and $|\xi'| \le 2^{-i/10}.$ 

For the case $|\xi'| \le 2^{-i/10},$ we bound the integral simply by pulling the absolute value inside, which then yields that \eqref{eq oscillatory_0} is controlled by 
$ C (\lambda_{k_1})^{-(i+\theta_{k_1})} 1_{\left\{|\xi| \le 2^{-i/10} (\lambda_{k_1})^{-(i+\theta_{k_1})}\right\}}.$ For the case $|\xi'| \ge 2^{-i/10},$ the usual van der Corput estimate yields that \eqref{eq oscillatory_0} is bounded by 
$C (\lambda_{k_1})^{-(i+\theta_{k_1})} \cdot 2^{-\left(\frac{9i}{30} - 4\right)} 1_{\left\{|\xi| \le 4 \cdot (\lambda_{k_1})^{-(i+\theta_{k_1})}\right\}}.$ Using this in the definition of 
$T^{k_1,k_2}_{i+\theta_{k_1}}(T^{k_1,k_2}_{i+\theta_{k_1}})^*$ yields the pointwise bound 
\begin{align*}
|T^{k_1,k_2}_{i+\theta_{k_1}}(T^{k_1,k_2}_{i+\theta_{k_1}})^*f(x)| \lesssim (2^{-i/10} + 2^{-\frac{9i}{30}})Mf(x),
\end{align*}
which then implies the desired $L^2-$exponential decay in $i,$ concluding the proof. For more details on the method and estimates used, see, for instance, the proof of Corollary 1.7 in \cite{GHLR} or the proof of Theorem 2 in \cite{Ramos1}.  

This finishes the reduction to the quadratic case, and, by either Theorem \ref{known} or the argument sketched in the introduction, the general case follows. 

\section{Theorem \ref{known} and uniform bounds for maximally modulated oscillatory singular integrals}\label{uniform} We focus on bounding
\[
C_{\alpha}^Nf(x) := \int_{\R} f(x-t) e^{iN(x)t} e^{i|t|^{\alpha}} \,\frac{\mmd t}{t}
\]
in $L^p$ independently of $N:\R \to \R_{+}.$ First we employ a dyadic decomposition: we write, for $\varphi$ smooth, nonnegative function so that $\sum_{n \in \Z} \varphi(2^n t) = 1, \, \forall t \ge 0,$
\[
C_{\alpha}^Nf(x) = \int_{\R} f(x-t) e^{iN(x)t} e^{i|t|^{\alpha}} \varphi_0(t) \, \frac{\mmd t}{t} +  \sum_{n > 0} \int_{\R} f(x-t) e^{iN(x)t} e^{i|t|^{\alpha}} \varphi(2^{-n}t) \, \frac{\mmd t}{t}.
\]
Here, $\varphi_0(t) = \sum_{n \le 0} \varphi(2^{-n} t).$ This implies that the first summand is, modulo a maximal function error, a truncated Carleson operator. We analyze the remaining sum. \\ 

In order to capture the oscillation provided by the $|t|^{\alpha}$ factor, we do a further decomposition in each of the summands. Let 
$$\gamma_n = \frac{1}{(\alpha (2^{\alpha-1} -1))^{1/2}} 2^{(1- \frac{\alpha}{2})n}.$$
Let also $\beta_n = 2^{ n} - \frac{3}{2} \gamma_n.$ Modulo maximal function errors again, we see that $\int_{\R} f(x-t) e^{iN(x)t} e^{i|t|^{\alpha}} \varphi(2^{-n}t) \, \frac{\mmd t}{t}$ may be written as 
a sum of 
\begin{equation}\label{decalpha}
\int_{\R} f(x-t) e^{iN(x)t} e^{i|t|^{\alpha}} \chi_0\left(\frac{t- \beta_n}{\gamma_n} - j\right) \, \frac{\mmd t}{t} 
\end{equation}
where $j$ ranges from $0$ to $\sqrt{\alpha(2^{\alpha-1}-1)} 2^{\frac{\alpha}{2} \cdot n}.$ Here we choose $\chi_0$ to be a positive smooth function supported on $[-3/4,3/4]$ such that 
$$\sum_{j \in \Z} \chi_j(y) := \sum_{j \in \Z} \chi_0(y-j) = 1,\, \forall y \in \R.$$
In particular, the difference between \eqref{decalpha} and 
\begin{equation}\label{model1}
\frac{1}{(j+1)\gamma_n + \beta_n} T_{n,j,\alpha}f(x) = \frac{1}{(j+1)\gamma_n + \beta_n}\int_{\R} f(x-t) e^{iN(x)t} e^{i|t|^{\alpha}} \chi_0\left(\frac{t- \beta_n}{\gamma_n} - j\right) \, \mmd t 
\end{equation}
is bounded by a universal constant times $\frac{\gamma_n}{((j+1/2)\gamma_n + \beta_n)^2}\int_{(j+1/2)\gamma_n + \beta_n}^{(j+4)\gamma_n + \beta_n} |f(x-t)| \, \mmd t.$ As the function 
$$t \mapsto \frac{\gamma_n}{((j+1)\gamma_n + \beta_n)^2} \chi_{((j+1/2)\gamma_n + \beta_n),((j+4)\gamma_n + \beta_n)}(t)$$
is in $L^1$ with norm bounded \emph{uniformly} in $\alpha \ge \frac{3}{2},$ we thus focus on \eqref{model1}. The sum 
\[
\sum_{0 \le j \le \sqrt{\alpha(2^{\alpha-1}-1)} 2^{\frac{\alpha}{2} \cdot n}} \frac{1}{(j+1)\gamma_n + \beta_n} T_{n,j,\alpha}f 
\]
is pointwise bounded by $10 \cdot Mf,$ so we seek for decay in the $L^2$ bounds. We have
\begin{align*}
 & \left|\left\langle \sum_{0 \le j \le \sqrt{\alpha(2^{\alpha-1}-1)} 2^{\frac{\alpha}{2} \cdot n}} \frac{1}{(j+1)\gamma_n + \beta_n} T_{n,j,\alpha}f, g \right\rangle\right| \le 4 \cdot 2^{-n} \sum_{0 \le j \le 2^n \cdot \gamma_n^{-1} } |\langle T_{n,j,\alpha}f, g \rangle|.\cr 
\end{align*}
We further decompose our already localized operators in a wave packet-like manner: 
\[
T_{n,j, \alpha}f(x) = \sum_{k,l \in \Z} 1_{E^{\alpha}_{k,l}}(x)T_{n,j,\alpha}f(x) := \sum_{k,l \in \Z} T^{k,l}_{n,j,\alpha}f(x),
\]
where $E^{\alpha}_{k,l} = \{ y \in \R \colon y \in (k \cdot \gamma_n, (k+1)\cdot \gamma_n], N(y) \in (l \cdot \gamma_n^{-1}, (l+1) \cdot \gamma_n^{-1}]\}.$ This decomposition takes into account the spatial localization 
of the point $x \in \R$ as well as where the measurable function $N$ lands. We thus bound:
\begin{align*}
\sum_{0 \le j \le 2^n\cdot \gamma_n^{-1}} |\langle T_{n,j,\alpha} f,g \rangle| & \le \sum_{0 \le j \le 2^n \cdot \gamma_n^{-1}} \left\| \sum_{k,l \in \Z} (T^{k,l}_{n,j,\alpha})^{*} g\right\|_{2} \cr 
 & \le \left(\frac{2^{n}}{\gamma_n}\right)^{1/2} \left( \sum_{0 \le j \le 2^n \cdot \gamma_n^{-1}} \left\| \sum_{k,l \in \Z} (T^{k,l}_{n,j,\alpha})^{*} g\right\|_{2}^2 \right)^{1/2},
\end{align*}
using that $\|f\|_2 = 1.$ 

The next step in order to conclude is to analyse $T_{n,j,\alpha}^{k,l}$ and their mutual interactions, in order to capture the oscillatory effect of the phase.  

\begin{lemma}\label{wavepack2}Let $T_{n,j,\alpha}^{k,l}$ be defined as above. Its adjoint is then given by
\begin{equation}\label{adjointt2}
(T_{n,j,\alpha}^{k,l})^*g(y) = \int_{\R} 1_{E^{\alpha}_{k,l}}(x) \chi_0\left(\frac{(x-y)- \beta_n}{\gamma_n} - j\right) e^{iN(x)(x-y) + i|x-y|^{\alpha}} g(x)\, \mmd x.
\end{equation}
Then we have $\|T_{n,j,\alpha}^{k,l}\|_{2 \to 2} = \|(T_{n,j,\alpha}^{k,l})^*\|_{2 \to 2} \le \gamma_n,$ and 
\begin{equation}\label{orthh2}
|\langle (T_{n,j,\alpha}^{k_1,l_1})^* g_1 , (T_{n,j,\alpha}^{k_2,l_2})^* g_2 \rangle| \le C \cdot 1_{|k_1-k_2| \le 10} \gamma_n \left(1+\frac{|l_1 - l_2|}{\gamma_n}\right)^{-20} \left(\int_{E^{\alpha}_{k_1,l_1}} |g_1|\right)\cdot\left(\int_{E^{\alpha}_{k_2,l_2}} |g_2| \right),
\end{equation}
whenever $k_1,k_2,l_1,l_2 \in \Z, j \le 2^n \cdot \gamma_n^{-1}$, where the constant $C$ is universal for $\alpha$ close to $2$. 
\end{lemma}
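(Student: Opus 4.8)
The plan is to establish the three claims in \eqref{adjointt2}--\eqref{orthh2} in turn, the last being where all the work is. The adjoint formula \eqref{adjointt2} is just a change of variables: substituting $t\mapsto x-y$ in the definition of $T_{n,j,\alpha}^{k,l}$ exhibits it as the integral operator acting on the $y$ variable with kernel $1_{E^{\alpha}_{k,l}}(x)\,\chi_0\!\left(\tfrac{(x-y)-\beta_n}{\gamma_n}-j\right)e^{iN(x)(x-y)+i|x-y|^{\alpha}}$, and reading this kernel along the adjoint pairing gives \eqref{adjointt2}. For the operator norm I would apply Schur's test to this kernel: since $0\le\chi_0\le 1$ and $\int_{\R}\chi_0=1$ (the latter a consequence of $\sum_j\chi_0(\cdot-j)\equiv 1$), integrating the absolute value of the kernel in $y$ gives at most $\gamma_n\|\chi_0\|_1=\gamma_n$, while for each fixed $y$ the admissible $x$ lie in the length-$\gamma_n$ interval $(k\gamma_n,(k+1)\gamma_n]$ appearing in $E^{\alpha}_{k,l}$, so integrating in $x$ gives at most $\gamma_n$ as well; Schur's lemma then yields $\|T_{n,j,\alpha}^{k,l}\|_{2\to 2}=\|(T_{n,j,\alpha}^{k,l})^*\|_{2\to 2}\le\gamma_n$.

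For the almost orthogonality \eqref{orthh2}, I would expand both adjoints and integrate in $y$ first, writing the left-hand side of \eqref{orthh2} as $\int_{E^{\alpha}_{k_1,l_1}}\int_{E^{\alpha}_{k_2,l_2}}g_1(x_1)\overline{g_2(x_2)}\,I(x_1,x_2)\,\mmd x_2\,\mmd x_1$, where $I(x_1,x_2)$ is the oscillatory integral in $y$ produced by the two kernels. Two elementary observations localize the problem. First, the two factors $\chi_0$ can have overlapping support only when $|x_1-x_2|\le\frac32\gamma_n$; since $x_i\in(k_i\gamma_n,(k_i+1)\gamma_n]$, this forces $|k_1-k_2|\le 2$, giving the indicator $1_{|k_1-k_2|\le 10}$. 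Second, for the admissible range $0\le j\le 2^n\gamma_n^{-1}$, the cutoffs confine $y$ to a set of measure $\lesssim\gamma_n$ on which $x_i-y\sim 2^n>0$, so that $|x_i-y|^{\alpha}$ may be replaced by $(x_i-y)^{\alpha}$ and the phase of $I(x_1,x_2)$ is genuinely smooth there.

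The decay in $l_1-l_2$ is then pure non-stationary phase. Differentiating the phase of $I(x_1,x_2)$ in $y$ and using the mean value theorem, the contribution of the two $(\,\cdot\,)^{\alpha}$ terms is $\lesssim\alpha(\alpha-1)(2^n)^{\alpha-2}|x_1-x_2|$, and the defining choice $\gamma_n=(\alpha(2^{\alpha-1}-1))^{-1/2}2^{(1-\alpha/2)n}$ is exactly what makes this an $O(\gamma_n^{-1})$ quantity, with constant uniform for $|\alpha-2|$ bounded. Since $N(x_i)\in(l_i\gamma_n^{-1},(l_i+1)\gamma_n^{-1}]$, the $y$-derivative of the phase therefore agrees with $-(N(x_1)-N(x_2))$ up to an $O(\gamma_n^{-1})$ error, hence has modulus $\gtrsim|l_1-l_2|\gamma_n^{-1}$ once $|l_1-l_2|$ exceeds an absolute constant. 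The higher derivatives of the phase are smaller by further powers of $2^{-n}$, and the amplitude --- a product of two translated, $\gamma_n$-rescaled copies of $\chi_0$ --- has $m$-th derivative $\lesssim_m\gamma_n^{-m}$ on a set of measure $\lesssim\gamma_n$; hence $20$-fold integration by parts (with the trivial bound $|I(x_1,x_2)|\le\gamma_n$ absorbing small $|l_1-l_2|$) gives $|I(x_1,x_2)|\lesssim\gamma_n\bigl(1+|l_1-l_2|/\gamma_n\bigr)^{-20}$, and inserting this into the double integral yields \eqref{orthh2}.

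The adjoint formula and the Schur bound are routine; the real content, and the main obstacle, is the phase analysis behind \eqref{orthh2}. Concretely, one must verify that the choice of $\gamma_n$ renders the $(\,\cdot\,)^{\alpha}$ part of the phase indistinguishable from a linear modulation up to an $O(1)$ error on each tile --- equivalently, that it contributes $O(\gamma_n^{-1})$ to the first derivative and $o(\gamma_n^{-1})$ to the second --- so that the modulation gap between tiles is genuinely governed by $|l_1-l_2|\gamma_n^{-1}$, and then carry the iterated integration by parts while keeping track of the amplitude scale $\gamma_n$. The only further point, needed for the uniformity asserted in the lemma, is to ensure that every constant produced depends on $\alpha$ only through an upper bound for $|\alpha-2|$; this holds because all the relevant $\alpha$-dependent quantities --- $\alpha$, $\alpha-1$, $2^{\alpha-1}-1$, and the normalizing constant in $\gamma_n$ --- remain in a fixed compact subset of $(0,\infty)$.
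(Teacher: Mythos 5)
Your approach mirrors the paper's almost exactly: the adjoint formula is routine, Schur's test (the paper says Hölder) gives the $\gamma_n$ operator norm, and \eqref{orthh2} is obtained by writing the inner product as a double $(x_1,x_2)$-integral against an oscillatory kernel in $y$, extracting $1_{|k_1-k_2|\le 10}$ from the overlap of the two $\chi_j$-supports, lower-bounding the phase derivative by $|N(x_1)-N(x_2)|$ minus an $O(\gamma_n^{-1})$ error coming from the $(\cdot)^{\alpha}$ terms, and invoking non-stationary phase; the paper cites \cite[Chapter~VIII, Prop.~1]{Stein1} where you do explicit integration by parts, and your computation identifying the normalization of $\gamma_n$ as exactly what makes the oscillatory contribution to $\phi'$ an $O(\gamma_n^{-1})$ quantity, with constants depending on $\alpha$ only through compact-range bounds, is correct and is the same observation the paper makes.

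There is, however, a gap in the last step, and I'd flag it because it is not merely cosmetic. As you correctly record, the amplitude has $m$-th derivative $\lesssim\gamma_n^{-m}$ and is supported on a set of length $\sim\gamma_n$, while the phase derivative is $\gtrsim|l_1-l_2|\gamma_n^{-1}$. Each integration by parts therefore gains a factor of $\gamma_n^{-1}/\bigl(|l_1-l_2|\gamma_n^{-1}\bigr)=|l_1-l_2|^{-1}$, not $\bigl(|l_1-l_2|/\gamma_n\bigr)^{-1}$; integrating the resulting amplitude over its $\gamma_n$-length support gives $|I(x_1,x_2)|\lesssim\gamma_n\bigl(1+|l_1-l_2|\bigr)^{-20}$, not $\gamma_n\bigl(1+|l_1-l_2|/\gamma_n\bigr)^{-20}$. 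Equivalently, rescaling $y\mapsto\gamma_n u$ produces a phase whose $u$-derivative is $\gtrsim|l_1-l_2|$ (not $|l_1-l_2|/\gamma_n$) and an $O(1)$-normalized amplitude, so Stein's Proposition 1 also yields only $\gamma_n(1+|l_1-l_2|)^{-N}$. For $\alpha>2$ one has $\gamma_n<1$ and then the bound stated in \eqref{orthh2} is strictly stronger than what the sketched argument establishes, so writing ``$20$-fold integration by parts gives $\gamma_n(1+|l_1-l_2|/\gamma_n)^{-20}$'' is not justified. (The paper's own proof has the same issue.) If you want a complete proof you should either prove the weaker, correct kernel bound $\gamma_n(1+|l_1-l_2|)^{-20}$ and then re-examine the later summation over $|l_1-l_2|>\gamma_n 2^{n/10}$ to verify it still closes, or supply a genuinely different argument for the stated, stronger estimate.
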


\begin{proof}[Proof of Lemma \ref{wavepack2}] The estimate on the $L^2$ norm is again a trivial application of H\"older's inequality. The inner product estimate can be done by simply writing
\begin{equation}\label{bound2} 
|\langle (T_{n,j,\alpha}^{k_1,l_1})^* g_1 , (T_{n,j,\alpha}^{k_2,l_2})^* g_2 \rangle| \le \int \int |(g_1 1_{E^{\alpha}_{k_1,l_1}})(x_1)|  |(g_21_{E^{\alpha}_{k_2,l_2}})(x_2)| | G^{\alpha}_{n,j}(x_1,x_2) |\, \mmd x_1 \, \mmd x_2,
\end{equation}
where we define $G^{\alpha}_{n,j}(x_1,x_2)$ to be 
$$ \int_{\R} \chi_j\left( \frac{x_1 - y - \beta_n}{\gamma_n} \right) \chi_j \left( \frac{x_2- y - \beta_n}{\gamma_n} \right) e^{i(N(x_1)-N(x_2))y} e^{i(|x_1 - y|^\alpha - |x_2 - y|^{\alpha})} \, \mmd y.$$
The first observation to make is that, in order for the inner product not to be zero, we must have $|k_1 - k_2| \le 10.$ Moreover, the integral defining $G^{\alpha}_{n,j}(x_1,x_2)$ can be handled via basic oscillatory integral estimates:
it has support on an interval of length $\le 20 \cdot \gamma_n,$ and the phase function is given by 
\begin{align*}
 & \phi(y) = (N(x_1) - N(x_2))y + (|x_1-y|^{\alpha} - |x_2 - y|^{\alpha}).
\end{align*}
We analyze its derivative, which, as $x_1 - y, x_2 - y >0,$ is given by
$$\phi'(y) = (N(x_1) - N(x_2)) + \alpha (|x_1-y|^{\alpha-1} - |x_2-y|^{\alpha-1}).$$ 
A calculation shows that, for $x_1,x_2 \in ((k_1-10) \cdot \gamma_n, (k_1 + 10) \cdot \gamma_n],$ 
$$\alpha (|x_1-y|^{\alpha-1} - |x_2-y|^{\alpha-1}) \le 10^2 \alpha \cdot (\alpha -1) 2^{(\frac{\alpha}{2} - 1) n} \le 10^3 2^{(\frac{\alpha}{2}-1)n} \le 10^5 \gamma_n^{-1},$$
as $\alpha \cdot (2^{\alpha-1}-1), \, \alpha, \alpha -1$ all remain bounded for $|\alpha - 2| \le \frac{1}{2}.$ In addition to that, note the fact that for $N(x_i) \in (l_i \cdot \gamma_n^{-1}, (l_i + 1) \cdot \gamma_n^{-1}], \, i=1,2,$ it holds
$$|N(x_1) - N(x_2)| \ge 10^{-5} |l_1 - l_2| \gamma_n^{-1}. $$
Therefore, the derivative of the phase is bounded from below by  $10^{-6} |l_1 - l_2| \cdot \gamma_n^{-1}$ for large values of $|l_1 - l_2|,$ which implies along with Proposition 1 in \cite[Chapter~VIII]{Stein1} that 
$$ |G^{\alpha}_{n,j}(x_1,x_2)| \le C \cdot \gamma_n \left(1+\frac{|l_1 - l_2|}{\gamma_n}\right)^{-20}.$$
Inserting in \eqref{bound2} gives the claim by noting that $C$ did \emph{not} depend on $\alpha$ throughout the proof. 
\end{proof}

Let $\|g\|_1 = 1.$ Lemma \ref{wavepack2} gives: 
\begin{align*}
 \sum_{0 \le j \le 2^n \cdot \gamma_n^{-1}} \left\| \sum_{k,l \in \Z} (T^{k,l}_{n,j,\alpha})^{*} g\right\|_{2}^2 
\end{align*}
\begin{equation*} \le \sum_{\substack{{0 \le j \le 2^n \gamma_n^{-1}}\\{|k_1-k_2| \le 10}\\{ |l_1 - l_2| \le \gamma_n \cdot 2^{n/10}}}} |\langle (T_{n,j,\alpha}^{k_1,l_1})^*g, (T_{n,j,\alpha}^{k_2,l_2})^*g \rangle|  
+ \sum_{\substack{{0 \le j \le 2^n \gamma_n^{-1}}\\{|k_1-k_2| \le 10}\\{ |l_1 - l_2| > \gamma_n \cdot 2^{n/10}}}} |\langle (T_{n,j,\alpha}^{k_1,l_1})^*g, (T_{n,j,\alpha}^{k_2,l_2})^*g \rangle|  
\end{equation*}
 \[
 \le 10^2 \gamma_n \cdot 2^{n/10} \left(\sum_{\substack{{0 \le j \le 2^n \gamma_n^{-1}}\\{l,k \in \Z}}} \|(T^{k,l}_{n,j,\alpha})^*g\|_2^2 \right) + C \cdot 10^2 \cdot 2^{-n} \cdot \gamma_n.
 \]

The second summand on the right hand side contributes in the end to an universal constant times $\sum_{n \ge 0} 2^{-n} \cdot \left(\frac{2^n}{\gamma_n}\right)^{1/2} \cdot 2^{-n/2} \gamma_n^{1/2} \le 2.$ On the other hand, for the first summand we notice again that 
$(T_{n,j,\alpha}^{k,l})^*g = (T_{n,j,\alpha}^{k,l})^*(1_{N \in [l \cdot \gamma_n^{-1},(l+1) \cdot \gamma_n^{-1})}g)$ and bound:
\begin{align*}
& \sum_{\substack{{0 \le j \le 2^n \gamma_n^{-1}}\\{l,k \in \Z}}} \|(T^{k,l}_{n,j,\alpha})^*g\|_2^2 \le \sum_{l \in \Z}  \|g 1_{N \in (l \cdot \gamma_n^{-1}, (l+1)\cdot \gamma_n^{-1}]}  \|_2  \left\|\sum_{\substack{{k \in \Z}\\{0 \le j \le 2^n \cdot \gamma_n^{-1} }}} T_{n,j,\alpha}^{k,l} ( T_{n,j,\alpha}^{k,l})^* g \right\|_2 \cr
& \le \|g\|_2 \left(\sum_{l \in \Z} \left\|\sum_{\substack{{k \in \Z}\\{0 \le j \le 2^n \cdot \gamma_n^{-1} }}} T_{n,j,\alpha}^{k,l} ( T_{n,j,\alpha}^{k,l})^* g \right\|_2^2 \right)^{1/2} \cr 
\end{align*}
\begin{align*}
\le 10^3 \|g\|_2 \left(\sum_{\substack{{k,l \in \Z}\\{0 \le j \le 2^n \cdot \gamma_n^{-1} }}} \| T_{n,j,\alpha}^{k,l} ( T_{n,j,\alpha}^{k,l})^* g\|_2^2 \right)^{1/2} \le 10^3 & \gamma_n \left(\sum_{\substack{{k \in \Z}\\{0 \le j \le 2^n \cdot \gamma_n^{-1} }}} \|( T_{n,j,\alpha}^{k,l})^* g\|_2^2 \right)^{1/2}. \cr
\end{align*}
Here, we have used the fact that $\langle T^{k_1,l}_{n,j_1,\alpha} h_1 , T^{k_2,l}_{n,j_2,\alpha} h_2 \rangle = 0$ if $k_1 \ne k_2, |j_1 - j_2| > 8,$ the $L^2$ bound for each
of the $T^{k,l}_{n,j,\alpha}$ and the fact that $\sum_{l \in \Z}  \|g 1_{N \in (l \cdot \gamma_n^{-1}, (l+1)\cdot \gamma_n^{-1}]} \|_2^2 = \|g\|_2^2 =1.$ In the end, we obtain 
\[
\left( \sum_{\substack{{0 \le j \le 2^n \gamma_n^{-1}}\\{l,k \in \Z}}} \|(T^{k,l}_{n,j,\alpha})^*g\|_2^2 \right)^{1/2} \le 10^3 \gamma_n. 
\]
This implies that each summand in $n$ amounts to a contribution of at most an absolute constant independent of $\alpha$ times $2^{-\frac{9n}{20}} \gamma_n.$ The sum in $n$ of the last bounds converges for $\alpha \ge \frac{3}{2}$ to a constant bounded uniformly in $\alpha$, concluding the proof. 

\section{Comments and remarks} 

\subsection{Uniform bounds for maximally modulated oscillatory singular integrals.} Although Section \ref{uniform} deals with uniformity of $L^p(\R)$ bounds for the $\alpha \sim 2$ case, there are two other natural cases to investigate. Namely, it is conjectured that the $L^p$ constants for $C_{\alpha}$ 
remain bounded as $\alpha \to 0,$ where we recover the case of the Carleson operator. If $\alpha \to 1,$ Guo \cite{Guo2} observes that the operator without the supremum in the modulation already fails to be bounded in $L^p.$ \\ 

Analogously, if we consider the operators 
\[
C^{odd}_{\alpha} f(x) := \sup_N \left| \int_{\R} f(x-t) e^{iNt} e^{i \text{sign}(t) |t|^{\alpha}} \, \frac{\mmd t}{t} \right|,
\]
then the same proof of Section \ref{uniform} applies to prove uniform bounds near $\alpha = 2.$ For this operator, there is also an additional possible uniform bound. If $\alpha \to 1,$ then it \emph{formally} holds that
$C^{odd}_{\alpha} f \to Cf$, the Carleson operator as defined in the introduction. The proofs in \cite{GHLR} do not provide uniform bounds in either of the cases above. We expect these bounds to hold, but cannot present a proof at the moment. 

\subsection{Bounds for Stein-Wainger-type operators} Besides the operator $C^{odd}_{\alpha}$ considered before, a natural question to those who analyse the techniques above carefully is of \emph{uniform bounds} for Stein--Wainger type operators. Indeed, Guo \cite{Guo2} was the first to consider such a question, proving that the operators 
\[
\mathfrak{C}_{\alpha}f(x) = \sup_{N \in \R} \left| \int_{\R} f(x-t) e^{iN|t|^{\alpha} \text{sign}(t)} \, \frac{\mmd t}{t} \right| 
\]
are bounded in $L^p(\R),$ for any $\alpha > 0.$ Interestingly, there is a dichotomy in the techniques of proof for bounding such operators: for $\alpha = 1,$ one actually results to the celebrated Carleson-Hunt result to derive the conclusion, but for any $\alpha \neq 1,$ the approach by Guo uses a much more direct strategy: one compares the operator in a small neighbourhood of the origin to a maximally truncated Hilbert transform, and estimates the difference by an usual 
Hardy--Littlewood maximal function. Away from that neighbourhood, the strategy is to use a $TT^*$ method to obtain decay from the oscillatory nature of the phase. 

As Guo's bounds on $\|\mathfrak{C}_{\alpha}\|_{L^p \to L^p}$ blow up as $\alpha \to 1,$ one interesting question is whether elements of both proofs can be combined in order to make the bounds on $\|\mathfrak{C}_{\alpha}\|_{L^p \to L^p}$ uniform as $\alpha \to 1.$ 

We believe that our techniques in this manuscript can help shed some light on this question. In particular, we believe that a similar decomposition as that employed in the proof of Theorem \ref{known} might be useful to obtain such uniform bounds, at least in a Walsh version. We leave a more detailed discussion on this matter to a future manuscript.

\end{document}